\documentclass[12pt]{amsart}

\usepackage{amsmath,amsthm,amssymb}
\usepackage{amsfonts}
\usepackage{comment}
\usepackage{amsrefs}
\usepackage{hyperref}
\usepackage{mathtools}
\usepackage{tikz}
\usepackage{bm}
\usepackage[margin=1in]{geometry}

\usepackage{mathrsfs}
\usepackage{enumerate}
\usepackage[shortlabels]{enumitem}
\usepackage{todonotes}

\newtheorem{theorem}{Theorem}[section]

\newtheorem{lemma}[theorem]{Lemma}

\theoremstyle{definition}

\newtheorem*{conjecture*}{Conjecture}
\newtheorem*{proposition*}{Proposition}
\newtheorem*{lemma*}{Lemma}

\theoremstyle{remark}
\newtheorem*{remark}{Remark}

\numberwithin{equation}{section}

\newcommand{\CC}{\mathbb C}

\newcommand{\ZZ}{\mathbb Z}

\newcommand{\RR}{\mathbb R}

\newcommand{\on}{\operatorname}

\newcommand{\modd}{\on{mod}}
\makeatletter
\newcommand*{\defeq}{\mathrel{\rlap{%
                     \raisebox{0.3ex}{$\m@th\cdot$}}%
                     \raisebox{-0.3ex}{$\m@th\cdot$}}%
                     =}
\makeatother

\newcommand{\eps}{\varepsilon}
\newcommand\half{\frac12}
\newcommand{\real}{\on{Re}}

\newcommand{\sgn}{\on{sgn}}

\usepackage{mathrsfs}


\author[Yang Liu]{Yang Liu}
\address{Department of Mathematics, Massachusetts Institute of Technology, \mbox{Cambridge, MA 02139}}
\email{\href{mailto:yliu@mit.edu}{{\tt yliu97@mit.edu}}}

\author[Peter S. Park]{Peter S. Park}
\address{Department of Mathematics, Princeton University, \mbox{Princeton, NJ 08544}}
\email{\href{mailto:pspark@math.princeton.edu}{{\tt pspark@math.princeton.edu}}}

\author[Zhuo Qun Song]{Zhuo Qun Song}
\address{Department of Mathematics, Princeton University, \mbox{Princeton, NJ 08544}}
\email{\href{mailto:zsong@princeton.edu}{{\tt zsong@princeton.edu}}}

\begin{document}

\title[RH is True for Period Polynomials of Almost All Newforms]{The ``Riemann Hypothesis" is True for Period Polynomials of Almost All Newforms}
\date{\today}

\begin{abstract}
The period polynomial $r_f(z)$ for a weight $k \ge 3$ and level $N$ newform $f \in S_k(\Gamma_0(N),\chi)$ is the generating function for special values of $L(s,f)$. The functional equation for $L(s, f)$ induces a functional equation on $r_f(z)$. Jin, Ma, Ono, and Soundararajan proved that for all newforms $f$ of even weight $k \ge 4$ and trivial nebetypus, the ``Riemann Hypothesis'' holds for $r_f(z)$: that is, all roots of $r_f(z)$ lie on the circle of symmetry $|z| =1/\sqrt{N}$. We generalize their methods to prove that this phenomenon holds for all but possibly finitely many newforms $f$ of weight $k \ge 3$ with any nebentypus. We also show that the roots of $r_f(z)$ are equidistributed if $N$ or $k$ is sufficiently large. 
\end{abstract}
\maketitle
\section{Introduction and Statement of Results}

Let $f \in S_k(\Gamma_0(N),\chi)$ be a newform of weight $k$, level $N$, and nebentypus $\chi$. Associated to $f$ is an $L$-function $L(s, f)$, which can be normalized so that the completed $L$-function \begin{equation} 
\Lambda(s, f) := N^{s/2} \int_0^\infty f(iy)y^{s-1} dy = \left(\frac{\sqrt{N}}{2\pi}\right)^s \Gamma(s) L(s, f) 
\end{equation}
satisfies the functional equation \begin{equation} \label{fe} \Lambda(s, f) = \epsilon(f) \Lambda(k-s, \bar{f}) \end{equation} for some $\epsilon(f) \in \CC$ with $|\epsilon(f)| = 1.$

The \emph{period polynomial} associated to $f$ is the degree $k-2$ polynomial defined by
\begin{equation}\label{defnperiod}
r_f(z) \defeq \int_0^{i \infty} f(y)(y-z)^{k-2} dy.
\end{equation}
By the binomial theorem, we have
\begin{align*}
r_f(z) &= i^{k-1}N^{-\frac{k-1}{2}} \sum_{n = 0}^{k-2} \binom{k-2}{n}(\sqrt{N}iz)^n \Lambda(k-1-n, f), \\
&= -\frac{(k-2)!}{(2\pi i)^{k-1}}\sum_{n = 0}^{k-2} \frac{(2\pi iz)^n}{n!} L(k-1-n,f).
\end{align*} 
Thus, $r_f(z)$ is the generating function for the special values $L(1,f), L(2,f) \ldots, L(k-1,f)$ of the $L$-function associated to $f$. For background on period polynomials, we refer the reader to \cites{period1,period2,period3,period4,period5}.

When $k \ge 3$, the period polynomial $r_f(z)$ is nonconstant, so one can consider where the roots of $r_f(z)$  are located. To this end, we use  the functional equation \eqref{fe} to observe that
\[
\overline{r_f(z)} = -(\sqrt{N}i\overline z)^{k-2} \epsilon(f)^{-1}r_f\left(\frac{1}{N\overline{z}}\right).
\]
Thus, if $\rho$ is a root of $r_f(z)$, then $\frac{1}{N\bar\rho}$ is also a root. Much like the behavior of the nontrivial zeroes of $L(s, f)$ predicted by the Generalized Riemann Hypothesis, one can consider whether all the roots of $r_f(z)$ lie on the curve of symmetry of the roots: in this case, the circle $|\rho| = 1/\sqrt N$. It is natural to expect the following conjecture, which is supported by extensive numerical evidence.

\begin{conjecture*}[``Riemann Hypothesis" for period polynomials]
Let $f \in S_k(\Gamma_0(N),\chi)$ be a newform. Then, the roots of $r_f(z)$ all lie on the circle $|\rho| = 1/\sqrt N$.
\end{conjecture*}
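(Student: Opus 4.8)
Since the full conjecture is open, I would aim to establish it for every newform of sufficiently large weight or level and dispose of the remaining finitely many newforms by direct computation; this is the method of Jin--Ma--Ono--Soundararajan, extended to odd weight and arbitrary nebentypus. The first step is to turn the problem into a count of sign changes of a real function. Because $\deg r_f = k-2$, it suffices to produce $k-2$ roots on $|z| = 1/\sqrt N$. Parametrizing the circle by $z = e^{i\theta}/\sqrt N$ and specializing the symmetry relation $\overline{r_f(z)} = -(\sqrt N i\bar z)^{k-2}\epsilon(f)^{-1}r_f(1/(N\bar z))$ to the circle, where $1/(N\bar z) = z$, shows that for a suitable unimodular constant $\eta = \eta(\epsilon(f),k)$ the function
\[
G(\theta) \defeq \eta\, e^{-i(k-2)\theta/2}\, r_f\!\left(\frac{e^{i\theta}}{\sqrt N}\right)
\]
is real-valued; it is $2\pi$-periodic when $k$ is even and $2\pi$-anti-periodic when $k$ is odd. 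In all cases the roots of $r_f$ on the circle correspond bijectively to the zeros of $G$, so it suffices to exhibit at least $k-2$ sign changes of $G$ on a fundamental domain.

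\textbf{The model.} Using the two binomial expansions of $r_f$, I would split each sum at the middle index $n\approx (k-2)/2$ and apply the functional equation \eqref{fe} to the second half, so that every surviving special value is an $L$-value (or completed $L$-value) whose argument has real part well to the right of the centre. Replacing each such value by its leading Dirichlet coefficient produces an explicit \emph{model} polynomial $P$: up to a nonzero scalar it is a sum of two truncated exponential series, one built from $2\pi i z$ and the other from $(\sqrt N i z)^{k-2}$ times the companion series in $\tfrac{2\pi}{Niz}$. On the circle the two inner arguments are complex conjugates of modulus $2\pi/\sqrt N$ and $(\sqrt N i z)^{k-2} = i^{k-2}e^{i(k-2)\theta}$, so after the twist of the first paragraph the model of $G$ becomes, up to that scalar, a real trigonometric expression $G_0(\theta)$ equal to $2\cos\bigl(\tfrac{k-2}{2}\theta + \phi_0\bigr)$ plus a bounded correction: when $N$ is large the correction is $O(1/\sqrt N)$, while when $k$ is large the truncated series converge to $e^{2\pi i z}$ and its conjugate and one gets $G_0(\theta) = \pm 2\,|e^{2\pi i z}|\cos\bigl(\tfrac{\beta(\theta)}{2}\bigr)$ with $\beta(\theta) = (k-2)\theta + \phi_0 - \tfrac{4\pi\cos\theta}{\sqrt N}$. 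Since $\beta$ is strictly increasing once $k > 2 + 4\pi/\sqrt N$, in either regime $G_0$ has exactly $k-2$ simple zeros on the circle, spaced $\asymp 1/k$ apart, and at the $k-2$ midpoints between consecutive zeros $|G_0| \ge 2e^{-2\pi/\sqrt N}$, a constant depending only on $N$.

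\textbf{Error control and conclusion.} The difference $r_f - (\text{scalar})\cdot P$ has two sources. First, the corrections $L(s,\cdot) - 1$ for arguments in the first half: by Deligne's bound on the Fourier coefficients of $f$ these contribute $O(2^{-\delta k})$ uniformly on the circle for a fixed $\delta > 0$, and in the $k$-large regime one must also account for the tails of the truncated exponentials, which are $O\bigl((2\pi)^{(k-2)/2}/((k-2)/2)!\bigr)$. Second, the $O(1)$ ``central'' indices $n$ with $|n - (k-2)/2| = O(1)$, where the $L$-value is not close to $1$ but is at worst polynomially large by the convexity bound; these multiply monomials of modulus $N^{-n/2} = N^{-(k-2)/4 + O(1)}$ on the circle, so the factorial in the binomial coefficient and this power of $N$ render them negligible once $k$ or $N$ is large (the weight-$3$ case is trivial, as a linear polynomial with the symmetry automatically has its single root on the circle). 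Hence $\|G - G_0\|_\infty < 2e^{-2\pi/\sqrt N}$ once $k$ or $N$ exceeds an explicit threshold, so evaluating $G$ at the $k-2$ midpoints shows its sign alternates; the intermediate value theorem yields at least $k-2$ sign changes of $G$ per fundamental domain, hence $k-2$ roots of $r_f$ on $|z| = 1/\sqrt N$, hence all of them. The finitely many newforms of bounded weight and level not covered by the threshold are checked directly.

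\textbf{Main obstacle.} The crux is the tension between the lower bound $2e^{-2\pi/\sqrt N}$ for the model at its test points and the accumulated error: when $N$ is small that bound degrades, so one needs $k$ large to force the error terms — especially those involving the central $L$-value $L(k/2, f)$, for which one invokes a convexity (or subconvexity) estimate valid for arbitrary nebentypus — to decay fast enough; when $k$ is small one instead needs $N$ large, and the delicate point is that the power of $N$ making the central monomials negligible must be controlled uniformly in the weight. Reconciling the two regimes, and making every constant effective so as to pin down the exact finite exceptional set, is where the real work lies; handling general nebentypus, by contrast, only adds the bookkeeping of tracking $\bar f$ versus $f$ through \eqref{fe}, which ultimately cancels in the symmetry relation.
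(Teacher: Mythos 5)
Your analytic core is essentially the paper's own method for its Theorem~\ref{thm:main}: pass to a real-valued function on the circle of symmetry (your $G(\theta)$ is the paper's $t_f(e^{i\theta})$ up to normalization), peel off a main term of the shape $z^{m}\exp\bigl(2\pi/(z\sqrt N)\bigr)$ times the edge $L$-value so that on $|z|=1$ one sees a cosine with slowly varying phase and amplitude at least $e^{-2\pi/\sqrt N}$, control the $L$-ratio corrections with Deligne's bound (the paper's Lemma~\ref{lratio}), control the exponential tail, and count $k-2$ sign changes via the intermediate value theorem (Lemma~\ref{ivt}); the large-$k$/large-$N$ tension you describe is exactly what the explicit thresholds $N(m)$ in Sections~\ref{sec:evenweight} and~\ref{sec:oddweight} quantify. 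The notable local difference is your treatment of the central terms: you invoke a convexity bound for $L(k/2,f)$, whereas for $k\ge 6$ the paper bounds the central term by the monotonicity of $|\Lambda(s,f)|$ to the right of the critical line (Lemma~\ref{mono}, via Hadamard factorization) followed by Lemma~\ref{lratio}, which keeps every constant explicit and the exceptional set small; Phragm\'en--Lindel\"of appears in the paper only in the weight-$4$ equidistribution estimate. Your route would work but with worse effective constants, and it would also push small-level weight-$4$ forms into the exceptional set, while the paper's ad hoc arguments settle $k=3,4$ for every $N$ outright. Also, your blanket ``$O(2^{-\delta k})$ uniformly'' for the ratio corrections is insufficient for fixed small $k$ (e.g.\ $k=6$); one needs the simultaneous decay in $N$ that the paper extracts from the factor $\exp(4\pi/\sqrt N)-1$, as your own closing paragraph half-acknowledges.

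The genuine gap is your last sentence: ``the finitely many newforms\dots are checked directly.'' That step is asserted, not performed, and it is precisely the step that is missing from the literature — which is why the statement you were asked to prove appears in the paper only as a conjecture. The paper proves the result for all newforms outside an explicit finite set $\mathcal S$ (for instance, every weight-$5$ newform of level $N\le 10331$, and levels up to $C(k)$ for each larger $k$), remarks that an effective computation could in principle dispose of $\mathcal S$, and then exhibits only a single numerical example (the weight-$7$, level-$11$ form), explicitly stating that no counterexamples are known. A rigorous finite verification is plausible — since $t_f$ is real on $|z|=1$, certified computations of the relevant $L$-values at enough test angles would exhibit the required sign changes — but until that computation (covering thousands of forms) is actually carried out with certified error bounds, your argument establishes only the ``all but possibly finitely many'' theorem in outline, not the conjecture itself.
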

El-Guindy and Raji~\cite{elg} proved this for Hecke eigenforms on $SL_2(\ZZ)$ with full level ($N=1$, for which the circle of symmetry is $|z| = 1$). They were inspired by the work of Conrey, Farmer, and \.{I}mamo\u{g}lu~\cite{farmer}, who showed an analogous result for the odd parts of these period polynomials, again with full level. 

Recent work by Jin, Ma, Ono, and Soundararajan~\cite{ono} proved the conjecture for all newforms of even weight $k \ge 4$ and trivial nebentypus. They also showed that the roots of $r_f(z)$ are equidistributed on the circle of symmetry for sufficiently large $N$ or $k$. Using similar methods, L{\"o}brich, Ma, and Thorner~\cite{thorner} proved an analogous result for polynomials generating special values of $L(s,\mathcal M)$ for a sufficiently well-behaved class of motives $\mathcal M$ with odd weight and even rank. 

In this paper, we generalize the methods of \cite{ono} to prove the conjecture for all but possibly finitely many newforms.
\begin{theorem}\label{thm:main}
The ``Riemann Hypothesis'' for period polynomials holds for all but possibly finitely many newforms with weight $k \ge 3$ and nontrivial nebentypus.
\end{theorem}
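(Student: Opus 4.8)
The plan is to adapt the strategy of \cite{ono}, carrying the dual newform $\bar f$ through the functional equation wherever \cite{ono} may use $f = \bar f$, and checking that the analytic input remains uniform in $f$.

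\emph{Step 1 (normalization).} The substitution $u = i\sqrt N\,z$ carries the circle $|z| = 1/\sqrt N$ to the unit circle, and up to a nonzero scalar turns $r_f(z)$ into
\[
\wt\Phi_f(u) \;\defeq\; \sum_{n=0}^{k-2}\frac{1}{n!}\Bigl(\frac{2\pi u}{\sqrt N}\Bigr)^n L(k-1-n,f),
\]
a polynomial of degree $k-2$: its leading coefficient is a nonzero multiple of $L(1,f)$, and $L(1,f)\ne 0$ since $\Lambda(1,f) = \epsilon(f)\Lambda(k-1,\bar f)$ with $L(k-1,\bar f)$ lying in (or, for $k = 3$, on the edge of) the region where the Euler product converges absolutely and does not vanish. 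Evaluating \eqref{fe} at $s = k-1-n$ shows that the coefficient of $u^n$ also equals $\epsilon(f)\,\frac1{(k-2-n)!}\bigl(\tfrac{2\pi}{\sqrt N}\bigr)^{k-2-n}L(n+1,\bar f)$; hence $\wt\Phi_f$ is self-inversive, $\rho$ being a root if and only if $1/\bar\rho$ is, and it suffices to show every root lies on $|u| = 1$.

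\emph{Step 2 (model).} Because $L(s,f)$ is close to $1$ for $\real s$ large, replacing the ``head'' values $L(k-1-n,f)$ ($n$ small) and --- via \eqref{fe} --- the ``tail'' values ($n$ near $k-2$) by $1$ and completing the two truncated exponential series suggests the model
\[
M_f(u) \;\defeq\; e^{2\pi u/\sqrt N} + \epsilon(f)\,u^{k-2}\,e^{2\pi/(u\sqrt N)},
\]
which is holomorphic on $\CC\setminus\{0\}$. On $|u| = 1$, writing $u = e^{i\theta}$ and $\epsilon(f) = e^{i\beta}$,
\[
M_f(e^{i\theta}) \;=\; 2\,e^{(2\pi/\sqrt N)\cos\theta}\,e^{i\frac{(k-2)\theta+\beta}{2}}\cos\!\Bigl(\tfrac{(k-2)\theta}{2} - \tfrac{2\pi}{\sqrt N}\sin\theta + \tfrac{\beta}{2}\Bigr).
\]
The phase $g(\theta) = \frac{(k-2)\theta}{2} - \frac{2\pi}{\sqrt N}\sin\theta + \frac\beta2$ satisfies $g'(\theta) \ge \frac{k-2}{2} - \frac{2\pi}{\sqrt N} > 0$ once $k + N$ is large, so $g$ increases by $(k-2)\pi$ over one period; thus $M_f$ has exactly $k-2$ roots on $|u|=1$, all simple and consecutively spaced $\asymp 1/k$ apart, with $|M_f'|\asymp k$ at each and no other root of $M_f$ within $\asymp 1/k$ of it.

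\emph{Step 3 (error).} I would bound $|\wt\Phi_f - M_f|$ on the annulus $\tfrac12 \le |u| \le 2$ and show it is $o(1)$ uniformly in $f$ as $k + N\to\infty$. The difference splits into: head terms $\tfrac1{n!}(2\pi/\sqrt N)^n|L(k-1-n,f) - 1|$ for $n$ small, where $k-1-n$ lies deep in the range of absolute convergence and Deligne's bound $|a_f(m)|\le d(m)m^{(k-1)/2}$ makes $|L(k-1-n,f)-1|$ small; the mirror tail terms, treated the same way through \eqref{fe} with $\bar f$; the handful of ``central'' terms with $n\approx (k-2)/2$, where $k-1-n$ is in the critical strip, so the convexity bound gives $|L(k-1-n,f)|\ll_\eps (Nk^2)^{1/4+\eps}$ against a weight $\tfrac1{n!}(2\pi/\sqrt N)^n$ that is super-exponentially small; and the tails of the two exponential series, negligible by the factorials. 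All conductors in sight divide $Nk^2$, so the bound depends only on $k$ and $N$ and is uniform in the nebentypus.

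\emph{Step 4 (conclusion) and the crux.} Fix $k + N$ large enough that Step 3's error $\eta$ lies below an absolute constant (this also forces $g' > 0$). Around each root $u_j$ of $M_f$ take a disc $D_j$ of radius $\asymp 1/k$, on whose boundary $|M_f|$ exceeds an absolute constant $>\eta \ge |\wt\Phi_f - M_f|$; Rouché then gives exactly one root $\rho_j$ of $\wt\Phi_f$ in each $D_j$. The $k-2$ discs are disjoint and $\deg\wt\Phi_f = k-2$, so they account for every root of $\wt\Phi_f$. From $|M_f(\rho_j)| = |M_f(\rho_j) - \wt\Phi_f(\rho_j)|\le \eta$ and $|M_f'(u_j)|\asymp k$ one gets $|\rho_j - u_j|\ll \eta/k$, so the self-inversive partner $1/\bar\rho_j$ --- also a root --- lies within $O(\eta/k)$ of $u_j$ and hence inside $D_j$; by uniqueness $\rho_j = 1/\bar\rho_j$, i.e.\ $|\rho_j| = 1$. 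Therefore all roots of $\wt\Phi_f$, equivalently of $r_f$, lie on $|z| = 1/\sqrt N$. As only finitely many newforms have any given weight and level, the finitely many pairs $(k,N)$ with $k + N$ below the threshold contribute only finitely many exceptions. I expect Step 3 to be the crux: making the error uniform in $k$, $N$, and the nebentypus at once --- in particular controlling the central critical-strip values $L(k-1-n,f)$, $n\approx(k-2)/2$, with the correct dependence on the conductor of $\chi$ --- so that the threshold for $k+N$ is a genuine absolute constant. The Rouché count and the symmetry argument are routine by comparison.
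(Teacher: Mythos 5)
There is a genuine gap, and it is exactly where you predicted it: Step 3, but the problem is structural rather than technical. Your model $M_f(u) = e^{2\pi u/\sqrt N} + \epsilon(f)u^{k-2}e^{2\pi/(u\sqrt N)}$ replaces the coefficients $L(k-1,f)$ and (via the functional equation) $L(k-1,\bar f)$ by $1$, so the $n=0$ and $n=k-2$ contributions to $\wt\Phi_f - M_f$ are $|L(k-1,f)-1|$ and $|L(k-1,\bar f)-1|$, with weight exactly $1$. For fixed weight these quantities do \emph{not} tend to $0$ as $N \to \infty$: Deligne's bound only gives $|L(k-1,f)-1| \le \zeta(\tfrac{k-1}{2})^2 - 1$, which is about $0.8$ at $k=6$ and about $0.45$ at $k=7$, and nothing better holds uniformly in $f$. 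Meanwhile the Rouch\'e threshold --- the minimum of $|M_f|$ on the boundaries of disjoint discs of radius $\asymp 1/k$ about its zeros --- is an absolute constant of size at most roughly $1$. So for fixed moderate $k$ the head-plus-tail error alone can exceed the threshold no matter how large $N$ is, and your claim that the error is $o(1)$ uniformly as $k+N\to\infty$ is false. Consequently the argument only yields the result for all $k$ beyond some absolute bound, leaving \emph{infinitely} many newforms untreated (every fixed smaller weight with $N\to\infty$), which is not enough for ``all but finitely many.'' For $k=3,4,5$ the situation is worse still, since there the ``head'' values sit at or inside the edge of the critical strip and are not even boundedly close to $1$.

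The fix is the normalization the paper uses: compare not with the unit-coefficient model but with $\overline{L(k-1,f)}\,\delta^{-1}z^{m}\exp(2\pi/(z\sqrt N))$, i.e.\ factor out the value $L(k-1,f)$ and control the remaining coefficients through the ratio bound $\bigl|\tfrac{L(k-1-n,f)}{L(k-1,f)}-1\bigr| \le \tfrac{\zeta(1+a)^2}{\zeta(1+b)^2}-1$ (Lemma~\ref{lratio}, via Deligne). Then the $n=0$ error term vanishes identically and every surviving error term ($S_1,S_2,S_3$ in Sections~\ref{sec:evenweight}--\ref{sec:oddweight}) carries a factor of $(2\pi/\sqrt N)$ or $2^{-m}$, so for each fixed $k\ge 6$ the error tends to $0$ as $N\to\infty$, producing the explicit thresholds $N(m)$ and hence a genuinely finite exceptional set; the weights $3,4,5$ are then handled by separate ad hoc arguments using $|\Lambda(1,f)|=|\Lambda(k-1,f)|$, Lemma~\ref{mono}, and a convexity/ratio estimate. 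Your self-inversive pairing plus Rouch\'e localization in Step 4 is a perfectly reasonable alternative to the paper's sign-change/intermediate-value lemma (Lemma~\ref{ivt}) and would work once the comparison function is renormalized as above, but as written the proposal does not prove the theorem.
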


\begin{remark} Note that for $k < 3$, the period polynomial is a constant. Therefore, Theorem~\ref{thm:main} is essentially the best result for which one could hope, since an effective computation can check that Theorem~\ref{thm:main} also holds for the finitely many possible exceptions. We denote the set of these finitely many newforms as $\mathcal S$, which consists of the following:
\begin{enumerate}
\item For $k=5$, all newforms with level $N \le 10331$.
\item For $k \ge 6$, all newforms with level $N \le C(k)$, where $C(k)$ is a constant given by tables at the end of Section \ref{table1} and \ref{table2}.
\end{enumerate}
We know of no counterexamples to Theorem \ref{thm:main}.
\end{remark}

We also show that the roots of $r_f(z)$ are equidistributed on the circle of symmetry for sufficiently large $N$ or $k$. 
\begin{theorem}
\label{thm:equi}
Let $f \in S_k(\Gamma_0(N),\chi)$ be a newform of weight $k \ge 4$, level $N$, and nebentypus $\chi$ such that $f \notin \mathcal S$. Then, the following are true:
\begin{enumerate}[label={\upshape(\roman*)}]
\item Suppose that $k = 4$, and let $z_1, z_2$ denote the roots of $r_f(z)$. Then for any real $\eps > 0$,
\[
\arg z_1 -\arg z_2 \equiv \pi + O_\eps(N^{-\frac{1}{4} + \eps}) \pmod{2\pi},
\]
where the implied constant depends only on $\eps$ and is effectively computable.
\item Suppose that $k=5$. There exists $c_f \in \RR$ such that the arguments of the roots of $r_f(z)$ can be written as
\[
 c_f + \theta_{\ell} + O_{\eps}\left(\frac{1}{N^{\frac{1}{2} -\eps}}\right) \pmod{2\pi}, \hspace{40pt} 0 \le \ell \le 2,
\]
where $\theta_{\ell}$ denotes the unique solution $\modd$ $2\pi$ of
\[
\frac{k-2}{2}\theta_{\ell} - \frac{2\pi}{\sqrt N} \sin \theta_\ell = \ell \pi,
\]
and the implied constant depends only on $\eps$ and is effectively computable.

\item Suppose that $k >5$. There exists $c_f \in \RR$ such that the arguments of the roots of $r_f(z)$ can be written as
\[
c_f + \theta_{\ell} + O\left(\frac{1}{2^{k/2}\sqrt N}\right) \pmod{2\pi}, \hspace{40pt} 0 \le \ell \le k-3,
\]
Here, $\theta_{\ell}$ is the unique solution $\modd$ $2\pi$ to the equation
\[
\frac{k-2}{2}\theta_{\ell} - \frac{2\pi}{\sqrt N} \sin \theta_\ell = \ell \pi,
\]
and the implied constant is absolute and effectively computable.
\end{enumerate}
\end{theorem}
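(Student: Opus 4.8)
The plan is to adapt the strategy of \cite{ono}: renormalize so that the circle of symmetry becomes the unit circle, reduce to a bounded number of ``dominant'' coefficients of the renormalized period polynomial, and analyze the resulting explicit ``main polynomial'' by hand. Substituting $w=\sqrt N\,iz$ in the first displayed expansion of $r_f$ and dividing by the nonzero quantity $i^{k-1}N^{-(k-1)/2}\Lambda(k-1,f)$, the binomial coefficient cancels against the ratio of $\Gamma$-factors, yielding
\[
r_f(z)=i^{k-1}N^{-\frac{k-1}{2}}\Lambda(k-1,f)\cdot g_f(w),\qquad
g_f(w):=\sum_{n=0}^{k-2}\frac{1}{n!}\Bigl(\tfrac{2\pi w}{\sqrt N}\Bigr)^{\!n}\frac{L(k-1-n,f)}{L(k-1,f)}.
\]
Thus the roots of $r_f$ are $z_\ell=-iw_\ell/\sqrt N$, where $w_\ell$ ranges over the roots of $g_f$, and $\arg z_\ell=\arg w_\ell-\tfrac\pi2$. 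By Theorem~\ref{thm:main} together with \cite{ono} we already know, for $f\notin\mathcal S$, that all $w_\ell$ lie on $|w|=1$, so only their arguments remain to be located.

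The next step isolates the dominant terms of $g_f$. Since $k\ge4$, the point $s=k-1$ lies strictly inside the half-plane of absolute convergence of the Euler product of $L(s,f)$; Deligne's bound, applied factor-by-factor, gives both $|L(k-1,f)|\gg1$ with an explicit constant and $L(k-1,f)=1+O(2^{-k/2})$ (isolate the $n=2$ Dirichlet term and use $|a_2|\le2\cdot2^{(k-1)/2}$). Applying the functional equation $\Lambda(m+1,f)=\epsilon(f)\Lambda(k-1-m,\bar f)$ to the terms with $n$ near $k-2$ converts them into $L$-values near $s=k-1$, again governed by absolute convergence, and one obtains
\[
g_f(w)=\underbrace{1+\tfrac{2\pi w}{\sqrt N}+u\Bigl(w^{k-2}+\tfrac{2\pi w^{k-3}}{\sqrt N}\Bigr)}_{=:A(w)}+E(w),\qquad
u:=\epsilon(f)\,\tfrac{L(k-1,\bar f)}{L(k-1,f)},\quad|u|=1,
\]
where $E(w)$ collects (a) the $O(2^{-k/2})$ relative corrections to the $w^{1}$ and $w^{k-3}$ coefficients and (b) the middle coefficients $2\le n\le k-4$. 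The latter carry the small factor $\tfrac1{n!}(2\pi/\sqrt N)^{n}$; for the at most one or two indices $n$ for which $k-1-n$ is near the critical line $\real s=k/2$ I would insert the convexity bound $L(k-1-n,f)\ll_\eps(Nk^2)^{1/4+\eps}$ (used deliberately, to keep all constants effective), which the factorial overwhelms. For $f\notin\mathcal S$, where $N$ is large relative to $k$, all of this collapses into $E(w)=O(2^{-k/2}N^{-1/2})$ uniformly on $|w|=1$. The small weights are where this is tightest: for $k=4$ there are no middle terms and $g_f$ is a genuine quadratic $1+\tfrac{2\pi w}{\sqrt N}\tfrac{L(2,f)}{L(3,f)}+uw^{2}$ whose linear coefficient has size $\ll N^{-1/2}\!\cdot N^{1/4+\eps}=N^{-1/4+\eps}$ (convexity for the central value $L(2,f)$, plus $|L(3,f)|\gg1$); and for $k=5$ the two subleading coefficients involve $L$-values at the edge $\real s=3=\tfrac{k+1}{2}$, controlled only up to $N^{\eps}$, which accounts for the $N^{-1/2+\eps}$ loss.

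For the main polynomial $A$ I would set $w=e^{i\theta}$ and use $1+\tfrac{2\pi}{\sqrt N}e^{i\theta}=e^{2\pi e^{i\theta}/\sqrt N}(1+O(N^{-1}))$ and $e^{i(k-2)\theta}+\tfrac{2\pi}{\sqrt N}e^{i(k-3)\theta}=e^{i(k-2)\theta}e^{2\pi e^{-i\theta}/\sqrt N}(1+O(N^{-1}))$, so that, up to controlled multiplicative errors,
\[
A(e^{i\theta})=e^{2\pi\cos\theta/\sqrt N}\Bigl(e^{2\pi i\sin\theta/\sqrt N}+u\,e^{i(k-2)\theta}e^{-2\pi i\sin\theta/\sqrt N}\Bigr),
\]
which vanishes exactly when $u\,e^{i((k-2)\theta-4\pi\sin\theta/\sqrt N)}=-1$, i.e.\ when $\tfrac{k-2}{2}\theta-\tfrac{2\pi}{\sqrt N}\sin\theta$ is congruent modulo $\pi$ to a fixed constant depending on $\arg u$; this yields $k-2$ solutions $\theta$, spaced $\asymp1/k$ apart. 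Comparing with the $\theta_\ell$ of the theorem and using $\tfrac{d}{d\theta}\bigl(\tfrac{k-2}{2}\theta-\tfrac{2\pi}{\sqrt N}\sin\theta\bigr)\asymp k$, the $f$-dependent constant induces a common shift $c_f$ of the roots, and the mismatch produced by the nonlinearity $\sin(\theta_\ell+c_f)\ne\sin\theta_\ell$ contributes only $O(c_f/\sqrt N)$, which is swallowed by the error term. Finally I would pass from $A$ to $g_f=A+E$ by a quantitative Rouch\'e argument: on circles of radius comparable to the target error about each root $e^{i\theta}$ of $A$ one has $|A|\gtrsim|A'|\cdot(\text{radius})\asymp k\cdot(\text{radius})>|E|$, so $g_f$ has exactly one root in each, and since $\deg g_f=k-2$ this exhausts them; translating back via $\arg z_\ell=\arg w_\ell-\tfrac\pi2$ gives the stated asymptotics, while for $k=4$ one solves the quadratic directly and reads off $\arg z_1-\arg z_2\equiv\pi+O_\eps(N^{-1/4+\eps})$. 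I expect the principal obstacle to be the uniform control of $E(w)$: one must bound every $L(k-1-n,f)$ for $1\le n\le k-3$ --- including the near-central values, where only convexity is available --- and verify that, for $f\notin\mathcal S$, these bounds together with the factorials fit inside the sharp error claimed in each of the three ranges of $k$; the cases $k=4,5$, lacking the $2^{-k/2}$ cushion, are the delicate ones.
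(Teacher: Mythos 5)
There is a genuine gap in your error estimate, and it occurs precisely in the regime where the theorem's error term is strongest. Your decomposition $g_f=A+E$ keeps only the four ``extreme'' coefficients ($n=0,1$ and $n=k-3,k-2$) as the main polynomial and asserts $E(w)=O(2^{-k/2}N^{-1/2})$ on $|w|=1$, justified by the claim that for $f\notin\mathcal S$ ``$N$ is large relative to $k$.'' That reading of $\mathcal S$ is incorrect: $\mathcal S$ is a \emph{finite} set, and the tables in Sections \ref{sec:evenweight} and \ref{sec:oddweight} show $N(m)=1$ once $m\ge 29$ (resp.\ $31$), so for all sufficiently large $k$ \emph{every} level $N\ge 1$ lies outside $\mathcal S$. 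For such $f$ (say $k=100$, $N=3$) the coefficient of $w^{2}$ in $g_f$ has size $\asymp\frac12\bigl(\frac{2\pi}{\sqrt N}\bigr)^{2}\asymp N^{-1}$, which is nowhere near $O(2^{-k/2}N^{-1/2})$; already for $k=20$, $N=14$ (allowed outside $\mathcal S$) your $E$ exceeds the permitted error by orders of magnitude. The same problem affects your replacement $1+\tfrac{2\pi w}{\sqrt N}=e^{2\pi w/\sqrt N}\bigl(1+O(N^{-1})\bigr)$: an $O(N^{-1})$ multiplicative error moves the phase by $\approx N^{-1}/k$, which again is not $O(2^{-k/2}N^{-1/2})$ unless $N\gg 2^{k}/k^{2}$. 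So the quantitative Rouch\'e step has nothing to run on: $|E|$ is not dominated by $|A|\asymp k\cdot(\text{radius})$ on circles of radius $\asymp 2^{-k/2}N^{-1/2}$.

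The missing idea, which is how the paper proceeds, is that the intermediate coefficients must be absorbed into the main term, not the error: on $|z|=1$ one writes $2\real\bigl(\delta^{-1}\overline{L(k-1,f)}\,z^{m(+\frac12)}\exp\bigl(\tfrac{2\pi}{z\sqrt N}\bigr)\bigr)$ as the main term, i.e.\ the \emph{entire} truncated exponential $\sum_{n}\frac{1}{n!}\bigl(\frac{2\pi}{z\sqrt N}\bigr)^{n}$ completed to $\exp$, which is exactly why the phase equation $\frac{k-2}{2}\theta-\frac{2\pi}{\sqrt N}\sin\theta=\ell\pi$ appears. The genuine errors are then only: the ratio corrections $\frac{L(k-1-n,f)}{L(k-1,f)}-1$, which Lemma \ref{lratio} bounds by $\ll 2^{n-m}$ (giving the crucial $2^{-m}$ factor), the exponential tail $n\ge m$ (carrying $1/m!$), and the single central-ish term handled via Lemma \ref{mono} rather than convexity; these are exactly $S_1,S_2,S_3$ of Sections \ref{sec:evenweight}--\ref{sec:oddweight}, and they are $O(2^{-m}N^{-1/2})$ uniformly in $k$ and $N$, which is what makes the window $\theta_\ell\pm D2^{-m}N^{-1/2}$ argument (via sign changes of the real-valued $t_f$ on $|z|=1$ and Lemma \ref{ivt}, rather than Rouch\'e) go through. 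Your treatment of $k=4$ and $k=5$ is essentially the paper's (convexity at the central value for $k=4$; an $N^{\eps}$ loss at the edge for $k=5$), but part (iii) as you have set it up does not close.
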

In Section~\ref{sec:prelims}, we introduce notation and lemmas that we will be using in our proof. In Section~\ref{sec:lowweight}, we will prove our main results for $k=3, 4,$ and $5$ using ad hoc arguments. For larger $k$, we prove Theorem~\ref{thm:main} in Section~\ref{sec:evenweight} (the case of $k$ even) and Section~\ref{sec:oddweight} (the case of $k$ odd), and we prove Theorem~\ref{thm:equi} in Section~\ref{sec:equi}. Finally, in Section~\ref{sec:numerical}, we detail our Sage computations suggesting that the roots of the period polynomial of the newform
\[
f(\tau) = q+10q^3+64q^4+74q^5+O(q^6) \in S_7\left(\Gamma_0(11),\left(\frac{-11}{\bullet}\right)\right)
\]
are all on the circle $|z|=1/\sqrt{11}$. This newform $f$ is in our finite set $\mathcal S$ of possible exceptions, which suggests that Theorem~\ref{thm:main} should be true even for newforms in $\mathcal S$.

\section{Preliminaries}\label{sec:prelims}
Throughout this section, we assume that $f \in S_k(\Gamma_0(N), \chi)$ is a newform of weight $k \ge 3$, level $N$, and arbitrary nebentypus $\chi.$ We note that the nebentypus character will be essentially invisible throughout our proof, other than the fact that it determines the level of $f$. We now define some notation related to $r_f(z)$ and prove lemmas about the values of $\Lambda(s,f)$ and $L(s,f)$ along the real line. The lemmas will be very similar in spirit to those proven in \cite{ono}.

Define $\delta$ to satisfy $\delta^2 = \epsilon(f)^{-1}.$ Now, define \begin{equation} \label{tpoly} t_f(z) = i^{-k+1}N^\frac{k-1}{2}z^{-\frac{k-2}{2}}\delta \cdot r_f\left(\frac{z}{i\sqrt{N}}\right) = \sum_{n = 0}^{k-2} \binom{k-2}{n}z^{n-\frac{k-2}{2}} \delta\Lambda(k-1-n, f), \end{equation} where $z^\frac{1}{2}$ denotes $r^\frac{1}{2}e^{\theta i/2}$ for $z = re^{\theta i}$ and $0 \le \theta < 2\pi.$ Using \eqref{fe}, one can compute
\[
\overline{t_f(z)} = t_f\left(\frac{1}{\bar{z}}\right).
\]
Therefore, if $t_f(z) = 0$, then $t_f\left(\frac{1}{\bar{z}}\right) = 0.$ Additionally, for $|z| = 1$, we also have $\overline{t_f(z)} = t_f(z)$, so $t_f(z)$ is real for $|z| = 1.$ Note that $t_f(z) = 0$ if and only if $r_f(\frac{z}{i\sqrt{N}}) = 0.$ Therefore, to prove Theorems \ref{thm:main} and \ref{thm:equi}, it suffices to show that all roots of $t_f(z)$ lie on the circle $|z| = 1$ and are equidistributed.

We will require the following monotonicity result.
\begin{lemma} \label{mono} We have \[ \Big|\Lambda\Big(\frac{k}{2}, f\Big)\Big| < \Big|\Lambda\Big(\frac{k}{2}+1, f\Big)\Big| < \dots < \Big|\Lambda\Big(\frac{k}{2}+j, f\Big)\Big| < \cdots \] Also, for all $0 < a < b$, \[ \Big| \Lambda\Big(\frac{k+1}{2} + a, f\Big) \Big| < \Big| \Lambda\Big(\frac{k+1}{2} + b, f\Big) \Big|. \] \end{lemma}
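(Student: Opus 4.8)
I would derive a convenient integral representation of $\Lambda(s,f)$ along the real axis and then read off monotonicity from the positivity of the kernels that appear. Starting from $\Lambda(s,f)=N^{s/2}\int_0^\infty f(iy)y^{s-1}\,dy$, I split the integral at $y=1/\sqrt N$ and apply $y\mapsto 1/(Ny)$ to the piece over $(0,1/\sqrt N)$, using the transformation formula for the Fricke involution $W_N$ (equivalently \eqref{fe}) to express $f(i/(Ny))$ through $\overline f(iy)$. After rescaling $v=\sqrt N y$ this yields, for every real $u$,
\[
\delta\,\Lambda\Big(\tfrac k2+u,\,f\Big)=\int_1^\infty v^{\,k/2-1}\Big(v^{u}h(v)+v^{-u}\,\overline{h(v)}\Big)\,dv,\qquad h(v):=\delta f\!\Big(\tfrac{iv}{\sqrt N}\Big),
\]
where the sign of $\delta$ is fixed so that $h(1)=|f(i/\sqrt N)|\ge 0$ (this is consistent because \eqref{fe} forces $\epsilon(f)=f(i/\sqrt N)/\overline{f(i/\sqrt N)}$). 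Separating real and imaginary parts and writing $A(u):=\real\big(\delta\Lambda(\tfrac k2+u,f)\big)$, $B(u):=\imag\big(\delta\Lambda(\tfrac k2+u,f)\big)$, one gets
\[
A(u)=\int_1^\infty v^{\,k/2-1}(v^{u}+v^{-u})\,\real h(v)\,dv,\qquad B(u)=\int_1^\infty v^{\,k/2-1}(v^{u}-v^{-u})\,\imag h(v)\,dv,
\]
so that $\big|\Lambda(\tfrac k2+u,f)\big|^2=A(u)^2+B(u)^2$, which \eqref{fe} shows is even in $u$.

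The monotonicity now comes from the kernels: for $v\ge 1$ and $u\ge 0$ the weights $v^{u}+v^{-u}=2\cosh(u\log v)$ and $v^{u}-v^{-u}=2\sinh(u\log v)$ are nonnegative and strictly increasing in $u$ (for $u>0$). Hence on any subinterval of $[1,\infty)$ where $\real h\ge 0$ and $\imag h$ is of one sign, the corresponding portions of $A(u)$ and of $|B(u)|$ are nonnegative and strictly increasing, by differentiation under the integral sign. I will arrange that the bulk of the integral, $v\in[v_0,\infty)$, is such a portion, so its contribution to $A$ and $|B|$ is positive and strictly increasing, while the complementary range $v\in[1,v_0]$ is a controlled perturbation; adding these shows $A(u)^2+B(u)^2$ is strictly increasing for $u\ge 0$. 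That is precisely the continuous assertion (take $u=\tfrac12+a$, $a>0$), and it contains, a fortiori, the chain $|\Lambda(\tfrac k2,f)|<|\Lambda(\tfrac k2+1,f)|<\cdots$.

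For the sign control of $h$, use $a_1=1$ to write $h(v)=\delta e^{-2\pi v/\sqrt N}+\delta\sum_{n\ge 2}a_n e^{-2\pi nv/\sqrt N}$; Deligne's bound $|a_n|\le d(n)n^{(k-1)/2}$ gives
\[
\Big|\sum_{n\ge 2}a_ne^{-2\pi nv/\sqrt N}\Big|\le e^{-2\pi v/\sqrt N}\sum_{n\ge 2}d(n)n^{(k-1)/2}e^{-2\pi(n-1)v/\sqrt N},
\]
which is a small explicit multiple of the leading term once $v\ge v_0(N,k)$ for an effective threshold; there $h(v)$ differs little in argument from $\delta$ and stays in a single closed quadrant, so the previous paragraph applies on $[v_0,\infty)$. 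On $[1,v_0]$ — the range approaching the cusp $0$, where $f(iv/\sqrt N)$ is large when $N$ is large — I bound the contribution by using the $W_N$-transformation formula to relate $h$ there to the understood region $[v_0,\infty)$, together with the fact that the weight $v^{\,k/2-1+u}$ is comparatively small near $v=1$: the resulting perturbation of $A$, $B$, and their $u$-derivatives is, effectively, negligible against the strictly positive lower bound for the derivative of the main part. The only content of Part~(i) not already covered is $|\Lambda(\tfrac k2,f)|<|\Lambda(\tfrac k2+1,f)|$; if $\Lambda(\tfrac k2,f)=0$ this is trivial (note $\Lambda(\tfrac k2+1,f)\ne 0$ since $\tfrac k2+1>\tfrac{k+1}2$ lies in the half-plane of absolute convergence of the Euler product of $L(s,f)$), and otherwise it again follows from the same positivity-and-size estimates.

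I expect the main obstacle to be exactly the cuspidal estimate in the third step: one must bound the size (and argument) of $f$ near the cusp $0$ — equivalently the contribution of $v$ near $1$ to the integral — strongly and uniformly enough in $N$ and $k$ that it cannot overturn the monotonicity coming from the main part. This is where Deligne's bound and the functional equation have to be combined with careful tracking of explicit constants; away from the cusp everything is routine.
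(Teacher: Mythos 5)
Your integral representation is correct (unfolding at the Fricke fixed point does give $\delta\Lambda(\tfrac k2+u,f)=\int_1^\infty v^{k/2-1}\bigl(v^{u}h(v)+v^{-u}\overline{h(v)}\bigr)dv$ with $h(1)$ real after a choice of $\delta$), but the argument breaks at exactly the step you flag as the ``main obstacle,'' and the break is not repairable in the form you propose. For the $n\ge 2$ terms to be dominated by the $n=1$ term you need $v_0$ of size roughly $k\sqrt N$, and on the complementary range $[1,v_0]$ the only information you have about $h$ is the trivial bound coming from Deligne, which gives $|h(v)|$ of size about $N^{(k+1)/4}$ near $v=1$ with completely uncontrolled argument. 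Integrating this against $v^{k/2-1+u}$ over $[1,v_0]$ produces a quantity that, for fixed $k$ and large $N$ and small $u$, is \emph{larger} than the ``main part'' $\int_{v_0}^\infty v^{k/2-1+u}e^{-2\pi v/\sqrt N}\,dv\asymp\Gamma(k/2+u)(\sqrt N/2\pi)^{k/2+u}$ (this is just the familiar fact that the trivial/convexity bound for the central value exceeds the first-term approximation; the true value is smaller only because of cancellation you have no handle on). So the claim that the $[1,v_0]$ range is a ``controlled perturbation'' of $A$, $B$, and their $u$-derivatives is not a technicality to be filled in; it is false at the level of available bounds.

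There is also a conceptual overshoot: you aim to prove that $|\Lambda(\tfrac k2+u,f)|$ is strictly increasing for \emph{all} real $u\ge 0$. That statement would in particular forbid real zeros of $\Lambda(s,f)$ in $(\tfrac k2,\tfrac{k+1}2)$, i.e. it encodes unproved zero-location information, and no amount of explicit bookkeeping in your scheme can deliver it unconditionally. The lemma deliberately claims less: monotonicity in unit steps starting at the central point, plus continuous monotonicity only to the right of $\tfrac{k+1}2$. That weaker statement has a soft proof, which is the paper's route: Hadamard factorization of the order-$1$ entire function $\Lambda(s,f)$ together with $\real(B)=-\sum_\rho\real(1/\rho)$ reduces everything to $|\Lambda(s,f)|=|e^{A}|\prod_\rho|1-s/\rho|$ on the real axis, and since every zero satisfies $\tfrac{k-1}2<\real\rho<\tfrac{k+1}2$, each factor $|s-\rho|$ increases for real $s\ge\tfrac{k+1}2$ and satisfies $|\tfrac k2-\rho|<|\tfrac k2+1-\rho|$ (the zero lies strictly left of the perpendicular bisector of $[\tfrac k2,\tfrac k2+1]$). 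If you want to salvage your approach, you would have to restrict to unit increments $u\mapsto u+1$ and still find a way to beat the uncontrolled central range, which I do not see how to do; I recommend the factorization argument instead.
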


\begin{proof} As $\Lambda(s, f)$ is entire of order $1$, we apply the Hadamard factorization theorem to write
\[ \Lambda(s, f) = e^{A+Bs} \prod_\rho (1-s/\rho)e^{s/\rho}, \]
where the sum is taken over all roots $\rho$ of $\Lambda(s, f)$. By \cite[Proposition 5.7(3)]{ink}, we have that 
\[
\real(B) = -\sum_{\rho} \real\Big({\frac{1}{\rho}}\Big).
\]
Note that $\frac{k-1}{2} < \real{\rho} < \frac{k+1}{2}$. This implies that $|1 - \frac{s}{\rho}|$ is increasing for $s \ge \frac{k+1}{2}$ and $|1 - \frac{k/2}{\rho} | < |1 - \frac{k/2+1}{\rho}|$, from which the lemma follows.
\end{proof}

We also prove a useful inequality on ratios of $L$-function values.
\begin{lemma} \label{lratio} For all $0 < a < b$, we have \[ \left|\frac{L(\frac{k+1}{2}+a, f)}{L(\frac{k+1}{2}+b, f)} - 1\right| \le \frac{\zeta(1+a)^2}{\zeta(1+b)^2}-1. \] \end{lemma}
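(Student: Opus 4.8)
The plan is to work directly with the Euler product of $L(s,f)$. Write $s_a=\tfrac{k+1}{2}+a$ and $s_b=\tfrac{k+1}{2}+b$; since $a,b>0$, both lie in the region of absolute convergence $\real s>\tfrac{k+1}{2}$. For each prime $p$ the local factor of $L(s,f)$ has the form $\prod_{j=1}^{d_p}(1-\alpha_{p,j}p^{-s})^{-1}$ with $d_p\le 2$ and $|\alpha_{p,j}|\le p^{(k-1)/2}$ for all $p,j$, by Deligne's theorem at $p\nmid N$ and the corresponding (weaker) bound on the local parameters at $p\mid N$. Hence
\[
\frac{L(s_a,f)}{L(s_b,f)}=\prod_p R_p,\qquad R_p=\prod_{j=1}^{d_p}\frac{1-\alpha_{p,j}p^{-s_b}}{1-\alpha_{p,j}p^{-s_a}},
\]
all products converging absolutely, and the quantity to bound is $\bigl|\prod_p R_p-1\bigr|$.

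The heart of the argument is a per-factor estimate. Setting $w=\alpha_{p,j}p^{-s_a}$ and $t=p^{-(b-a)}\in(0,1)$, so that $\alpha_{p,j}p^{-s_b}=tw$ and $|w|\le p^{-1-a}<1$, the identity
\[
\frac{1-tw}{1-w}-1=\frac{(1-t)\,w}{1-w},
\]
together with $|1-w|\ge 1-|w|$ and the monotonicity of $x\mapsto x/(1-x)$ on $[0,1)$, gives
\[
\Bigl|\frac{1-tw}{1-w}-1\Bigr|\le (1-t)\,\frac{|w|}{1-|w|}\le (1-t)\,\frac{p^{-1-a}}{1-p^{-1-a}}=\frac{1-p^{-1-b}}{1-p^{-1-a}}-1=:\varepsilon_p.
\]
Since $R_p$ is a product of at most two such factors, the elementary observation that $|P-1|,|Q-1|\le\varepsilon$ forces $|PQ-1|\le(1+\varepsilon)^2-1$ yields $|R_p-1|\le(1+\varepsilon_p)^2-1$ for every prime $p$.

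Finally I would globalize via the standard telescoping bound $\bigl|\prod_p(1+z_p)-1\bigr|\le\prod_p(1+|z_p|)-1$ (legitimate here since $\sum_p\varepsilon_p<\infty$), applied with $z_p=R_p-1$:
\[
\Bigl|\frac{L(s_a,f)}{L(s_b,f)}-1\Bigr|\le\prod_p(1+\varepsilon_p)^2-1=\Bigl(\prod_p\frac{1-p^{-1-b}}{1-p^{-1-a}}\Bigr)^{\!2}-1=\frac{\zeta(1+a)^2}{\zeta(1+b)^2}-1,
\]
using $\prod_p(1-p^{-1-c})^{-1}=\zeta(1+c)$. The remaining points are purely bookkeeping: the precise bound on the local parameters at the ramified primes, and the absolute convergence of the Euler products and of $\sum_p\varepsilon_p$ — all routine because $a,b>0$. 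I expect the only genuine (and minor) obstacle is recognizing that one must estimate $|R_p-1|$ factor-by-factor through the identity above, rather than bounding $|R_p|$: the latter would control only the distance of the ratio from $0$, not from $1$.
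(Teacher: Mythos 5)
Your proof is correct, but it takes a genuinely different route from the paper's. The paper writes the ratio as $\exp\bigl(-\int_a^b \frac{L'}{L}(\tfrac{k+1}{2}+s,f)\,ds\bigr)$, bounds the coefficients of $-L'/L$ by Deligne's estimate $|\Lambda_f(n)|\le 2n^{\frac{k-1}{2}}\Lambda(n)$ to get $\bigl|\int_a^b \frac{L'}{L}\bigr| \le 2\log\frac{\zeta(1+a)}{\zeta(1+b)}$, and concludes with $|e^x-1|\le e^{|x|}-1$; you instead factor the ratio over the Euler product, prove the local estimate $|R_p-1|\le\bigl(\frac{1-p^{-1-b}}{1-p^{-1-a}}\bigr)^2-1$ via the identity $\frac{1-tw}{1-w}-1=\frac{(1-t)w}{1-w}$, and globalize with the telescoping bound $\bigl|\prod(1+z_p)-1\bigr|\le\prod(1+|z_p|)-1$. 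The Deligne input is the same in both arguments (your $|\alpha_{p,j}|\le p^{(k-1)/2}$ versus the paper's bound on $\Lambda_f(n)$, which also covers ramified primes since the local factor there has a single parameter of modulus at most $p^{(k-1)/2}$), and both yield the identical bound $\frac{\zeta(1+a)^2}{\zeta(1+b)^2}-1$. The paper's proof is shorter once one grants the $\exp$--integral identity (which needs nonvanishing of $L$ on the segment, automatic in the region of absolute convergence); yours is more elementary and self-contained, making the multiplicative structure and the treatment of ramified primes completely explicit, at the cost of the extra bookkeeping you acknowledge (absolute convergence of the Euler products and of $\sum_p\varepsilon_p$, both immediate for $a,b>0$). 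Your per-factor estimate, the algebraic identity $(1-t)\frac{p^{-1-a}}{1-p^{-1-a}}=\frac{1-p^{-1-b}}{1-p^{-1-a}}-1$, the two-factor step $|PQ-1|\le(1+\varepsilon)^2-1$, and the final product evaluation against $\zeta$ all check out.
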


\begin{proof}
We have that
\[ \left|\frac{L(\frac{k+1}{2}+a, f)}{L(\frac{k+1}{2}+b, f)} - 1\right| = \left| \exp\left(-\int_a^b \frac{L'(\frac{k+1}{2}+s, f)}{L(\frac{k+1}{2}+s, f)} ds \right)-1 \right|. \]
If we express \[ -\frac{L'(s, f)}{L(s, f)} = \sum \frac{\Lambda_f(n)}{n^s}, \] then Deligne's bound on the eigenvalues of the Hecke operators on $S_k(\Gamma_0(N),\chi)$ states that 
\begin{equation}\label{deligne23}
|\Lambda_f(n)| \le 2n^\frac{k-1}{2}\Lambda(n)
\end{equation}
where $\Lambda(n)$ denotes the von Mangoldt function; for a reference, see \cite[Theorem 2.32]{onobook}. Therefore, we have that
\begin{equation}\label{deligne}\left| \int_a^b \frac{L'(\frac{k+1}{2} + s, f)}{L(\frac{k+1}{2} + s, f)} \right| \le -2 \int_a^b \frac{\zeta'(1+s)}{\zeta(1+s)} ds = 2 \log \frac{\zeta(1+a)}{\zeta(1+b)}. \end{equation} Now the lemma follows from the inequality $|e^x-1| \le e^{|x|}-1.$
\end{proof}

Finally, we show a lemma that serve as our main means of proving Theorem \ref{thm:main} for period polynomials.

\begin{lemma}
\label{ivt}
Let $\sgn(r)$ equal $-1$ for negative real numbers $r$, $1$ for positive real numbers $r$, and $0$ for $r = 0.$ If there exist real numbers $0 \le \theta_1 < \theta_2 < \dots < \theta_k < 2\pi$ such that either \[ \sgn(t_f(e^{i\theta_j})) = (-1)^j \text{ for all } 1 \le j \le k, \] or \[ \sgn(t_f(e^{i\theta_j})) = (-1)^{j+1} \text{ for all } 1 \le j \le k, \] then all solutions to $t_f(z) = 0$ satisfy $|z| = 1.$
\end{lemma}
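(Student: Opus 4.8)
The plan is to exploit the two symmetry properties of $t_f(z)$ established just before the lemma: that $t_f$ is real-valued on the unit circle $|z|=1$, and that its zeros come in pairs $z \leftrightarrow 1/\bar z$ symmetric about that circle. Since $t_f(z)$ is (up to the nonvanishing factor $i^{-k+1}N^{(k-1)/2}z^{-(k-2)/2}\delta$) a Laurent polynomial with the same support of monomials as $r_f(z/i\sqrt N)$ has degrees $0,\dots,k-2$, the function $z^{(k-2)/2}t_f(z)$ is an honest degree $k-2$ polynomial in $z$, hence has exactly $k-2$ roots (with multiplicity) in $\CC^\times$. The counting strategy is: use the sign hypothesis to force at least $k-2$ roots onto the circle $|z|=1$, and then conclude that there are no others.

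First I would restrict $t_f$ to the unit circle and set $g(\theta) := t_f(e^{i\theta})$, a continuous real-valued function on $[0,2\pi)$ (continuity on the whole circle, including $\theta=0$, follows because $t_f$ is a Laurent polynomial; the branch-cut issue in the definition of $z^{1/2}$ is harmless since only integer powers of $z$ appear after clearing, or equivalently $t_f(e^{i\theta})$ extends continuously). Given $\theta_1 < \dots < \theta_k$ with $\sgn g(\theta_j) = (-1)^j$ (the other case is identical after multiplying by $-1$), the intermediate value theorem produces a zero of $g$ strictly inside each of the $k-1$ open arcs $(\theta_j,\theta_{j+1})$ for $1\le j\le k-1$. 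These $k-1$ zeros are distinct and all lie on $|z|=1$. So $z^{(k-2)/2}t_f(z)$, a polynomial of degree $k-2$, already has $k-1$ roots on the circle --- which at first glance overshoots.

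The resolution of that apparent contradiction is the point I expect to be the main obstacle, and it is where the symmetry $t_f(1/\bar z) = \overline{t_f(z)}$ must be used carefully together with the degree bound. The correct count is as follows: the hypothesis gives sign changes across $k-1$ arcs, but one must check these yield at least $\lfloor (k-1)/2\rfloor$ or so genuinely distinct roots — actually, re-examining, the $k$ sample points $\theta_1,\dots,\theta_k$ lie in $[0,2\pi)$ and the arcs $(\theta_j,\theta_{j+1})$ for $j=1,\dots,k-1$ are genuinely disjoint, giving $k-1$ distinct zeros $e^{i\phi_1},\dots,e^{i\phi_{k-1}}$ on the circle. Since a root $\rho$ on the unit circle satisfies $1/\bar\rho = \rho$, these are fixed by the involution, so no pairing collapses them. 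Thus I would instead argue: suppose for contradiction some root $\rho_0$ of $t_f$ has $|\rho_0|\ne 1$. Then $1/\bar\rho_0$ is a second root, distinct from $\rho_0$ and from all the $e^{i\phi_m}$, so $z^{(k-2)/2}t_f(z)$ has at least $(k-1)+2 = k+1 > k-2$ roots, contradicting its degree. Hence every root lies on $|z|=1$. (If instead one only gets $k-1$ on-circle roots with no off-circle root, that is already $k-1 > k-2$, so in fact the sign hypothesis forces $g$ to vanish to higher order somewhere or forces exactly the right count; either way the degree bound $k-2$ combined with $\ge k-1$ forced on-circle zeros plus $\ge 2$ more off any hypothetical off-circle root yields the contradiction, so no off-circle root can exist.)

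The steps in order: (1) clear denominators to view $P(z) := z^{(k-2)/2}t_f(z)$ as a polynomial of degree $k-2$ in $z$ with $P(0)\ne 0$; (2) record from the preliminaries that $P(e^{i\theta})$ has argument independent of $\theta$ up to sign, i.e. $t_f$ is real on the circle, and that the zero set of $P$ is stable under $z\mapsto 1/\bar z$; (3) apply the IVT on the $k-1$ arcs determined by the $\theta_j$ to produce $k-1$ distinct unimodular roots of $P$; (4) observe $k-1 > k-2$, so in fact these must be accounted for with multiplicity or the hypothesis is vacuous unless... — more cleanly, assume an off-circle root exists, pair it with its reflection to exceed the degree, and derive a contradiction; conclude all roots of $t_f$, hence all roots of $r_f$, lie on the circle of symmetry. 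The only subtlety requiring care is step (3)'s claim that the arc-interior zeros are distinct, which is immediate from disjointness of the arcs, and the bookkeeping in step (4) about whether the count is $k-1$ or exactly $k-2$; the latter is reconciled by noting that when $\theta_1 = 0$ is allowed, the "wrap-around" arc $(\theta_k, \theta_1 + 2\pi)$ carries no sign-change hypothesis, so one genuinely gets only $k-1$ arcs and the degree count is tight enough to kill any off-circle root.
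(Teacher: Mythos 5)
Your argument is correct in its essentials but takes a genuinely different route from the paper. The paper's proof stays entirely on the circle: besides the $k-1$ zeros from the interior arcs $(\theta_j,\theta_{j+1})$, it extracts one more zero from the wrap-around arc $(\theta_k,\theta_1+2\pi)$ --- for odd $k$ this requires re-choosing the branch of $z^{1/2}$ so that the discontinuity lies outside that arc --- and then concludes by comparing with the total number of zeros of $t_f$; the reflection $z\mapsto 1/\bar z$ is never used there. You instead combine the sharp degree count (namely that $z^{(k-2)/2}t_f(z)$ is a polynomial of degree $k-2$ with nonzero constant term) with the symmetry of the zero set under $z\mapsto 1/\bar z$: a hypothetical off-circle zero would come with a distinct reflected partner and, together with the distinct on-circle zeros forced by the IVT, would exceed the degree. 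This buys you two things: you never need the wrap-around arc, hence no branch-cut adjustment for odd $k$, and your count is robust. In fact your sharper count exposes an off-by-two slip in the paper's own statement: with $k$ alternating sample points the IVT already forces $k-1>k-2$ distinct zeros, so the hypothesis as literally written can never occur and the lemma is vacuously true; in the applications the lemma is invoked with $k-2$ alternating points (e.g.\ three points when $k=5$), and it is precisely in that corrected setting that your reflection argument still closes the proof ($k-3$ forced on-circle zeros plus an off-circle pair already exceeds $k-2$), whereas the paper's proof must harvest the extra wrap-around zero to reach a full count of $k-2$ on-circle zeros (which, incidentally, is also what feeds the equidistribution statement).

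Two small cautions. First, your hedging about multiplicities and ``vacuousness'' is unnecessary: the $k-1$ zeros you produce are distinct because the open arcs are disjoint, and that is all your contradiction needs. Second, your claim that $t_f(e^{i\theta})$ extends continuously across $\theta=0$ ``because only integer powers appear after clearing'' is false for odd $k$: the half-integer power $z^{-(k-2)/2}$, with the branch fixed by $0\le\theta<2\pi$, produces a sign jump at the cut. This does not damage your proof, since you apply the IVT only on arcs contained in $[0,2\pi)$, where the fixed branch is continuous; it would matter only if, like the paper, you needed a sign change across the wrap-around arc.
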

\begin{proof}
First, $t_f(z)$ is real for $|z| = 1$, so $\sgn(t_f(e^{i\theta_j}))$ is well defined. Now, by the Intermediate Value Theorem, there exist $\theta \in (\theta_j, \theta_{j+1})$ such that $t_f(e^{i\theta}) = 0$ for all $1 \le j \le k-1.$ This gives us $k-1$ roots of $t_f(z)$ that lie on $|z| = 1.$ When $k$ is even, we also get a root in the range $(\theta_k, \theta_1+2\pi)$ by the Intermediate Value Theorem. When $k$ is odd, we may redefine the square root in order to move the discontinuity into an interval outside of $[\theta_k, \theta_1 + 2\pi]$. This would only affect the sign of $t_f(z)$. By the intermediate value theorem, this shows the existence of a zero with argument in the range $[\theta_k, \theta_1 + 2\pi]$ as desired. As $t_f(z) = 0$ for at most $k$ values of $z$, the above argument shows that we have found all of them.
\end{proof}

\section{Proof for Weights 3, 4, and 5}\label{sec:lowweight}
Here we prove Theorem \ref{thm:main} and \ref{thm:equi} for $k = 3, 4,$ and $5.$ 

\subsection{The weight 3 case}

For $k = 3,$ (\ref{tpoly}) gives that \[ t_f(z) = \delta z^{-\frac{1}{2}}(\Lambda(2, f) + z\Lambda(1, f)). \] By \eqref{fe}, we know that \[ |\Lambda(2, f)| = |\Lambda(1, f)|, \] so the root of $t_f(z)$ lies on the unit circle.

\subsection{The weight 4 case}

For $k = 4$, we have \[ t_f(z) = \delta(z^{-1}\Lambda(3, f) + 2\Lambda(2, f) + z\Lambda(1, f)). \] Now, note that for $|z| = 1$, it follows that \[ \frac{1}{2}t_f(z) = \real(\delta(\Lambda(2, f) + z\Lambda(1, f))) = \real(\delta\Lambda(2, f)) + \real(\delta z\Lambda(1, f)).\] By Lemma \ref{mono}, we have that \[ |\real(\delta\Lambda(2, f))| \le |\Lambda(2, f)| < |\Lambda(3, f)| = |\Lambda(1, f)|, \] so there exist $2$ values of $z$ with $|z| = 1$ such that \[ \real(\delta z\Lambda(1, f)) = -\real(\delta\Lambda(2, f)), \] as desired.

In order to prove Theorem~\ref{thm:equi}(iii), we need to bound $|\Lambda(2, f)| / |\Lambda(1, f)|$. First, note that
\[
|\Lambda(1, f)| = |\Lambda(3, f)| \gg N^{\frac{3}{2}}.
\]
In order to bound $|\Lambda(2, f)|$, we appeal to the Phragm{\'e}n-Lindel{\"o}f Principle; specifically, see \cite[Lemma 5.2, Theorem 5.53]{ink} and apply \eqref{deligne23}. This allows to obtain for any $\epsilon > 0$
\[
|\Lambda(2, f)| \le \max_{t \in \mathbb{R}} |\Lambda(5/2 + \epsilon + it, f)| = \max_{t \in \mathbb{R}} N^{\frac{5}{4} +\half\epsilon}|L(5/2 + \epsilon + it, f)| \ll_{\epsilon} N^{\frac{5}{4}+\epsilon},
\]

Thus, we have that
\[
\frac{|\Lambda(2, f)|}{|\Lambda(1, f)|} \ll N^{-\frac{1}{4}+\epsilon}
\] 
and the values of $z$ satisfying $t_f(z) = 0$ satisfy
\[
\arg z = \pm \frac{\pi}{2} + \arg(\overline{\delta \Lambda(1, f)}) + O(N^{-\frac{1}{4}+\epsilon})
\]
\subsection{The weight 5 case}

For $k = 5$, we have \[ t_f(z) = \delta(z^{-\frac{3}{2}}\Lambda(4, f) + 3z^{-\frac{1}{2}}\Lambda(3, f) + 3z^\frac{1}{2}\Lambda(2, f) + z^\frac{3}{2}\Lambda(1, f)). \]
Once again, for $|z| = 1$, we have
\[
\frac{1}{2}t_f(z) = \real(\delta(3z^\frac{1}{2}\Lambda(2, f) + z^\frac{3}{2}\Lambda(1, f))) = \real(3\delta z^\frac{1}{2}\Lambda(2, f)) + \real(\delta z^\frac{3}{2}\Lambda(1, f)).
\]
There exist three reals $0 \le \theta_1 < \theta_2 < \theta_3 < 2\pi$ such that $|\real(\delta (e^{i\theta_j})^\frac{3}{2} \Lambda(1, f))| = |\Lambda(1, f)|$ for $1 \le j \le 3$, and $\real(\delta (e^{i\theta_j})^\frac{3}{2} \Lambda(1, f))$ alternates in sign. Thus, by Lemma \ref{ivt}, we are done if we are able to show that
\[ |\real(3\delta z^\frac{1}{2}\Lambda(2, f))| < |\Lambda(1, f)|,
\]
which is equivalent to proving
\[
\frac{|\Lambda(3, f)|}{|\Lambda(4, f)|} < \frac{1}{3}.
\]
Let $0 < \epsilon < 1.$ By Lemmas \ref{mono} and \ref{lratio}, it follows that
\[ \frac{|\Lambda(3, f)|}{|\Lambda(4, f)|} \le \frac{|\Lambda(3+\epsilon, f)|}{|\Lambda(4, f)|} = \frac{|L(3+\epsilon, f)|}{|L(4, f)|} \left(\frac{2\pi}{\sqrt{N}}\right)^{1-\epsilon} \frac{\Gamma(3+\epsilon)}{\Gamma(4)} \le \frac{\zeta(1+\epsilon)^2}{\zeta(2)^2} \left(\frac{2\pi}{\sqrt{N}}\right)^{1-\epsilon} \frac{\Gamma(3+\epsilon)}{\Gamma(4)}.
\]
Choosing $\epsilon = 2/5$, the last expression is less than $\frac{1}{3}$ for $N \ge 10332$, which completes the proof for $k = 5.$

To show the desired equidistribution property, define $\theta_1$ and $\theta_2$ as above. Now let $\theta_{\pm} = (\theta_1 + \theta_2)/2 \pm \eps$, for $\eps > 0$ to be chosen later. Then, we see that
\[
|\real (\delta (e^{\theta_{\pm} i})^{\frac{3}{2}} \Lambda(1, f)| = |\Lambda(1, f)| \sin \Big(\frac{3}{2}\eps\Big)
\]
with the sign of $\real (\delta (e^{i\theta_{\pm}})^{\frac{3}{2}} \Lambda(1, f)$ being different for $\eps > 0$ and $\eps < 0$.
If we can show that
\[
|\real (3\delta z^{\half} \Lambda(2, f)| \le \Big|\sin \Big(\frac{3}{2}\eps\Big)\Big|\cdot|\Lambda(1, f)|
\]
then Lemma \ref{ivt} will show that the root has an argument lying between $\theta_{-}$ and $\theta_{+}$. By the bounding above, we only require
\[
\frac{\zeta(1+\epsilon)^2}{\zeta(2)^2} \left(\frac{2\pi}{\sqrt{N}}\right)^{1-\epsilon} \frac{\Gamma(3+\epsilon)}{\Gamma(4)} < \frac{1}{3} \sin \Big(\frac{3}{2}\eps\Big).
\]
Choosing $\eps = O(N^{-\half + \epsilon})$ suffices.

\section{Proof for Remaining Even Weights}\label{sec:evenweight}
In this section, we will show Theorem \ref{thm:main} for all even weights $k \ge 6.$ Throughout the section, we will restrict our attention to those $z$ such that $|z| = 1.$ For simplicity, let $m = \frac{k-2}{2}$, and define \begin{align} P_f(z) &= \frac{1}{2} \binom{2m}{m} \Lambda(m+1, f) + \sum_{n = 0}^{m-1} \binom{k-2}{n} z^{m-n} \delta \Lambda(n+1, f) \nonumber \\ \label{pfeven} &= \frac{1}{2} \binom{2m}{m} \Lambda(m+1, f) + (2m)!\left(\frac{\sqrt{N}}{2\pi}\right)^{2m+1} \delta^{-1} z^m \sum_{n = 0}^{m-1} \frac{1}{n!}\left(\frac{2\pi}{z\sqrt{N}}\right)^n \overline{L(2m+1-n, f)}. \end{align}
This satisfies
\[
t_f(z) = P_f(z) + \overline{P_f(z)} = 2 \real(P_f(z)).
\]

Next, define
\begin{align*} Q_f(z) &= \frac{1}{(2m)!} \left(\frac{2\pi}{\sqrt{N}}\right)^{2m+1}P_f(z) \\
&= \frac{1}{2(m!)^2}\left(\frac{2\pi}{\sqrt{N}}\right)^{2m+1} \Lambda(m+1, f) + \delta^{-1}z^m \sum_{n = 0}^{m-1}\frac{1}{n!}\left(\frac{2\pi}{z\sqrt{N}}\right)^n \overline{L(2m+1-n, f)}. \end{align*}
Note that $\sgn(Q_f(z)) = \sgn(P_f(z)).$ As in \cite{ono}, rewrite
\[
Q_f(z) = \delta^{-1} \overline{L(2m+1, f)} z^m \exp\left(\frac{2\pi}{z\sqrt{N}}\right) + S_1(z) + S_2(z) + S_3(z),
\]
where we define
\begin{align*}
&S_1(z) = \delta^{-1} \overline{L(2m+1, f)} z^m \sum_{n = 0}^{m-1} \frac{1}{n!} \left(\frac{2\pi}{z\sqrt{N}}\right)^n \left(\overline{\left(\frac{L(2m+1-n, f)}{L(2m+1, f)}\right)}-1 \right) \\
&S_2(z) = -\delta^{-1} \overline{L(2m+1, f)} z^m \sum_{n \ge m} \frac{1}{n!} \left(\frac{2\pi}{z\sqrt{N}}\right)^n \\
&S_3(z) = \frac{1}{2(m!)^2} \left(\frac{2\pi}{\sqrt{N}}\right)^{2m+1} \Lambda(m+1, f).
\end{align*}
For $z = e^{i\theta}$, note that
\begin{equation} \label{defC}
\arg\left(\delta^{-1} \overline{L(2m+1, f)} z^m \exp\left(\frac{2\pi}{z\sqrt{N}}\right)\right) = C + m\theta - \frac{2\pi \sin\theta}{\sqrt{N}},
\end{equation}
where $C$ is a fixed constant depending on $\delta$ and $L(2m+1, f).$ Therefore, we can pick $k$ values of $z$ on the circle $|z| = 1$ such that the previous expression has argument $\ell\pi$ for integers $\ell.$ The value of $Q_f(z)$ at these points have alternating positive and negative real part with magnitude at least $|L(2m+1, f)|\exp\left(-\frac{2\pi}{\sqrt{N}}\right)$. By Lemma \ref{ivt}, it suffices to show that
\[
|S_1(z)| + |S_2(z)| + |S_3(z)| < |L(2m+1, f)|\exp\left(-\frac{2\pi}{\sqrt{N}}\right).
\]
To bound $S_1(z)$, we use Lemma \ref{lratio} in the form $\Big|\frac{L(2m+1-n, f)}{L(2m+1, f)} \Big| \le \zeta(\frac{1}{2}+m-n)^2-1.$ This gives \[ |S_1(z)| \le |L(2m+1, f)| \sum_{n = 1}^{m-1} \frac{1}{n!} \left(\frac{2\pi}{\sqrt{N}}\right)^n (\zeta(1/2+m-n)^2-1). \] For the term $n = m-1$ in the above expression, we use the bound $\zeta(3/2)^2-1 \le 35/6$. For $0 \le n \le m-2$, note that $2^x(\zeta(1/2+x)^2-1)$ is decreasing for $x \ge 2.$ Therefore, for $0 \le n \le m-2$, we find that \[ \zeta(1/2 + m-n)^2-1 \le 2^{n-m}\cdot 4(\zeta(5/2)^2-1) \le \frac{16}{5}2^{n-m}. \] Now, we combine the above estimates with $S_2(z)$ to obtain \begin{align} \frac{|S_1(z)| + |S_2(z)|}{|L(2m+1, f)|} &\le \frac{16}{5}\sum_{n = 1}^{m-1} \frac{1}{n!} \left(\frac{2\pi}{\sqrt{N}}\right)^n \frac{2^n}{2^m} + \frac{17}{4} \frac{1}{(m-1)!} \left(\frac{2\pi}{\sqrt{N}}\right)^{m-1} + \sum_{n \ge m} \frac{1}{n!} \left(\frac{2\pi}{\sqrt{N}}\right)^n \nonumber \\ \label{s12} &\le \frac{16}{5}2^{-m} \left(\exp\left(\frac{4\pi}{\sqrt{N}}\right) - 1\right) + \frac{17}{4} \frac{1}{(m-1)!} \left(\frac{2\pi}{\sqrt{N}}\right)^{m-1}. \end{align}
To finish, we estimate $|S_3(z)|$ using Lemma \ref{mono} and then \ref{lratio}. \begin{align} |S_3(z)| &\le \frac{1}{2(m!)^2} \left(\frac{2\pi}{\sqrt{N}}\right)^{2m+1} |\Lambda(m+1, f)| \le \frac{1}{2(m!)^2} \left(\frac{2\pi}{\sqrt{N}}\right)^{2m+1} |\Lambda(m+2, f)| \nonumber \\ &\le \frac{m+1}{2m!} \left(\frac{2\pi}{\sqrt{N}}\right)^{m-1} |L(m+2, f)| \le \frac{m+1}{2m!} \left(\frac{2\pi}{\sqrt{N}}\right)^{m-1} |L(2m+1, f)| \zeta(3/2)^2 \nonumber \\ \label{s3} &\le \frac{7}{2} \frac{m+1}{m!} \left(\frac{2\pi}{\sqrt{N}}\right)^{m-1} |L(2m+1, f)|. \end{align}
By using (\ref{s12}) and (\ref{s3}), it suffices to verify \begin{equation} \label{finaleven} \frac{16}{5}2^{-m} \left(\exp\left(\frac{4\pi}{\sqrt{N}}\right) - 1\right) + \frac{17}{4} \frac{1}{(m-1)!} \left(\frac{2\pi}{\sqrt{N}}\right)^{m-1} + \frac{7}{2} \frac{m+1}{m!} \left(\frac{2\pi}{\sqrt{N}}\right)^{m-1} < \exp\left(-\frac{2\pi}{\sqrt{N}}\right). \end{equation}

For each value of $m$ in the first row on the following table, the value $N(m)$ is such that inequality \eqref{finaleven} holds for all $N\ge N(m)$. Note that the case $m = 1$ was done in Section \ref{sec:lowweight}.

\begin{center} \label{table1} 
\begin{tabular} { |c || c | c |c|c|c|c|c|c|c|c|c|c|c|c|c|c|c|c|c|c|}
\hline
   $m$&  29& 21& 18& 16& 14 & 13& 12& 11& 10 & 9&8&7&6&5&4&3&2 \\
\hline
  $N(m)$& 1&  2&  3&  4&  5& 6& 7& 9& 11&14& 19& 27&41&69&142&433&5875 \\
  \hline
  \end{tabular}
  \end{center} 
Therefore, for all $m \ge 29$, $N(m) = 1$. This completes our proof of Theorem \ref{thm:main} for $k$ even.

\section{Proof for Remaining Odd Weights}\label{sec:oddweight}

In this section, we will show Theorem \ref{thm:main} for all odd weights $k \ge 7.$ As in the above section, we will restrict our attention to those $z$ such that $|z| = 1.$ For simplicity, let $m = \frac{k-3}{2}$, and define \begin{align} P_f(z) &= \sum_{n = 0}^{m} \binom{k-2}{n} z^{m-n+\frac{1}{2}} \delta \Lambda(n+1, f) \nonumber \\ \label{pfodd} &= (2m+1)! \left(\frac{\sqrt{N}}{2\pi}\right)^{2m+2}\delta^{-1} z^{m+\frac{1}{2}} \sum_{n = 0}^m \frac{1}{n!}\left(\frac{2\pi}{z\sqrt{N}}\right)^n \overline{L(2m+2-n, f)}, \end{align} so $t_f(z) = 2\real(P_f(z))$ As in the above section, define
\begin{align*}
Q_f(z) &= \frac{1}{(2m+1)!} \left(\frac{2\pi}{\sqrt{N}}\right)^{2m+2} P_f(z) = \delta^{-1} z^{m+\frac{1}{2}} \sum_{n = 0}^m \frac{1}{n!}\left(\frac{2\pi}{z\sqrt{N}}\right)^n \overline{L(2m+2-n, f)} \\
&= \overline{L(2m+2, f)}\delta^{-1}z^{m+\frac{1}{2}} \exp\left(\frac{2\pi}{z\sqrt{N}}\right) + S_1(z) + S_2(z) + S_3(z),
\end{align*}
where $S_1(z), S_2(z)$, and $S_3(z)$ are defined as follows.
\[
S_1(z) = \overline{L(2m+2, f)}\delta^{-1}z^{m+\frac{1}{2}} \sum_{n = 0}^{m-1} \frac{1}{n!}\left(\frac{2\pi}{z\sqrt{N}}\right)^n\left(\overline{\left(\frac{L(2m+2-n, f)}{L(2m+2, f)}\right)} - 1\right),
\]
\[
S_2(z) = -\overline{L(2m+2, f)}\delta^{-1}z^{m+\frac{1}{2}} \sum_{n \ge m} \frac{1}{n!} \left(\frac{2\pi}{z\sqrt{N}}\right)^n,
\]
\[
S_3(z) = \delta^{-1}z^{m+\frac{1}{2}} \frac{1}{m!} \left(\frac{2\pi}{z\sqrt{N}}\right)^m \overline{L(m+2, f)}.
\]
As in Section \ref{sec:evenweight}, it suffices to show that
\[
|S_1(z)| + |S_2(z)| + |S_3(z)| < |L(2m+2, f)|\exp\left(-\frac{2\pi}{\sqrt{N}}\right).
\]
The proof of this will proceed in a very similar way to that of the above section. Note that the function $2^x(\zeta(1+x)^2-1)$ is decreasing for $x \ge 1.$ By Lemma \ref{lratio}, for $0 \le n \le m-1$, we can bound \[ \Big|\frac{L(2m+2-n, f)}{L(2m+2, f)} - 1 \Big| \le \zeta(1+m-n)^2-1 \le 2^{n-m} \cdot 2(\zeta(2)^2-1) \le 2^{n-m} \cdot \frac{7}{2}. \] By Lemma \ref{lratio}, we have \begin{equation} \label{s12odd} \frac{|S_1(z)| + |S_2(z)|}{|L(2m+2, f)|} \le \frac{7}{2}2^{-m} \sum_{n = 1}^{m-1} \frac{1}{n!} \left(\frac{4\pi}{\sqrt{N}}\right)^n + \sum_{n \ge m} \frac{1}{n!} \left(\frac{2\pi}{\sqrt{N}}\right)^n \frac{2^n}{2^m} \le \frac{7}{2}2^{-m} \left(\exp\left(\frac{4\pi}{\sqrt{N}}\right) - 1\right). \end{equation} Now we use Lemma \ref{mono} to bound $|L(m+2, f)|$. \[ |L(m+2, f)| \le \frac{1}{(m+1)!}\left(\frac{2\pi}{\sqrt{N}}\right)^{m+2} |\Lambda(m+2, f)| \le (m+2) \left(\frac{2\pi}{\sqrt{N}}\right)^{-1} |L(m+3, f)|. \] Therefore, we have that \begin{equation} \label{s3odd} |S_3(z)| \le \frac{m+2}{m!}\left(\frac{2\pi}{\sqrt{N}}\right)^{m-1}|L(m+3, f)| \le \frac{m+2}{m!}\left(\frac{2\pi}{\sqrt{N}}\right)^{m-1} \zeta(2)^2|L(2m+2, f)|, \end{equation} after applying Lemma $\ref{lratio}.$ Finally, by using (\ref{s12odd}) and (\ref{s3odd}), it suffices to show that \begin{equation} \label{finalodd} \frac{7}{2}2^{-m} \left(\exp\left(\frac{4\pi}{\sqrt{N}}\right) - 1\right) + \frac{m+2}{m!}\left(\frac{2\pi}{\sqrt{N}}\right)^{m-1} \zeta(2)^2 < \exp\left(-\frac{2\pi}{\sqrt{N}}\right). \end{equation}

For each value of $m$ in the first row on the following table, the value $N(m)$ is such that inequality \eqref{finalodd} holds for all $N\ge N(m)$. Note that the cases $m = 0, 1$ was done in Section \ref{sec:lowweight}.

\begin{center} \label{table2}
\begin{tabular} 
{ |c || c | c |c|c|c|c|c|c|c|c|c|c|c|c|c|c|c|c|c|c|}
\hline
   $m$&  31& 23& 19& 17& 16 & 15& 14& 13& 12 & 11 & 10 & 9&8&7&6&5&4&3&2 \\
\hline
  $N(m)$& 1&  2&  3&  4&  5& 6& 7& 8& 10&13& 16& 22&31&47&76&137&285&766&5258 \\
  \hline
  \end{tabular}
  \end{center} 

Therefore, for $m \ge 31$, $N(m) = 1$. This completes the proof of Theorem \ref{thm:main} for $k$ odd.

\section{Equidistribution of Roots for Large Weights}\label{sec:equi}

Let $k \ge 6.$ For even $k$, set $m = \frac{k-2}{2}$. Then, the arguments in the previous sections show that for $z = e^{i\theta}$, then \[ Q_f(z) = |L(2m+1, f)| \left(\exp\left(i(m\theta+C) + \frac{2\pi}{\sqrt{N}}e^{-i\theta}\right) + O\Big(\frac{1}{2^m \sqrt{N}}\Big) \right) \] for some real constant $C$ is defined in \eqref{defC}. Therefore, \[ \real(Q_f(z)) = |L(2m+1, f)|\left(\exp\left(\frac{2\pi}{\sqrt{N}} \cos \theta\right) \cos\left(m\theta+C - \frac{2\pi}{\sqrt{N}} \sin \theta \right) + O\Big(\frac{1}{2^m \sqrt{N}}\Big) \right). \] Consider the $\theta_\ell$ such that $m\theta_\ell+C - \frac{2\pi}{\sqrt{N}} \sin \theta_\ell = \frac{\pi}{2} + \ell \pi.$ Then it is simple to verify that for some constant $D$, the two values $\theta_\ell \pm \frac{D}{2^m \sqrt{N}}$, $\real(Q_f(z))$ has different signs. This completes the proof for even $k$.

For odd $k$, set $m = \frac{k-3}{2}.$ Then, the arguments in the previous section show that for $z = e^{\theta i}$,
\[
Q_f(z) = |L(2m+2, f)| \left(\exp\left(i\Big(\Big(m+\frac{1}{2}\Big)\theta+C\Big) + \frac{2\pi}{\sqrt{N}}e^{-\theta i}\right) + O\Big(\frac{1}{2^m \sqrt{N}}\Big) \right).
\]
Therefore, it follows that
\[
\real(Q_f(z)) = |L(2m+2, f)| \left(\exp\left(\frac{2\pi}{\sqrt{N}} \cos \theta\right)\cos\left(\Big(m+\frac{1}{2}\Big)\theta + C - \frac{2\pi}{\sqrt{N}} \sin \theta\right) + O\left(\frac{1}{2^m \sqrt{N}}\right) \right).
\]
Now, consider the values $\theta_\ell$ such that $(m+1/2)\theta_\ell + C - \frac{2\pi}{\sqrt{N}} \sin \theta_\ell = \frac{\pi}{2} + \ell\pi.$ Once again, one can verify that for the values $\theta_\ell \pm \frac{D}{2^m \sqrt{N}},$ $\real(Q_f(z))$ has opposite signs. This completes the proof of Theorem \ref{thm:equi}.

\section{A Numerical Example}\label{sec:numerical}
Consider the newform $f \in S_7\left(\Gamma_0(11),\left(\frac{-11}{\bullet}\right)\right)$ whose $q$-series is given by
\[
q+10q^3+64q^4+74q^5+O(q^6).
\]
All the coefficients of $f$ are real, and we have $\epsilon(f) = 1$. In light of the functional equation $L(s,f)=L(k-1-s,f)$, we can use Sage to compute the critical values of $L(s,f)$ and thereby obtain $r_f(z)$. We calculate that the roots of $r_f(z)$ are  
\[
z_1 \approx -0.294570496142963 - 0.0643219535709181 i,
\]
\[
z_2 \approx -0.204098252273756 + 0.221930156418385i,
\]
\[
z_3 \approx 0.301511344577764 i,
\]
\[
z_4 \approx 0.204098252273756 + 0.221930156418385i,
\]
and
\[
z_5 \approx 0.294570496142963 - 0.0643219535709181i.
\]
All five roots have absolute value $\approx 0.301511344577764 \approx 1/\sqrt{11}$, as expected given the statement of Theorem~\ref{thm:main}.

\section*{Acknowledgments}

\noindent This research was supervised by Ken Ono at the Emory University Mathematics REU and was supported by the National Science Foundation (grant number DMS-1557960). We would like to thank Ken Ono and Jesse Thorner for offering their advice and guidance and for providing many helpful discussions and valuable suggestions on the paper. 

\bibliographystyle{amsxport}
\bibliography{biblio}

\end{document}